\numberwithin{equation}{section}
\newtheorem{theorem}{Theorem}[section]
\newtheorem{lemma}[theorem]{Lemma}
\newtheorem{statement}[theorem]{Statement}
\theoremstyle{definition}
\newtheorem{remark}[theorem]{Remark}
\theoremstyle{definition}
\theoremstyle{definition}
\def\dashint{\operatorname%
{\,\,\text{\bf--}\kern-.98em\DOTSI\intop\ilimits@\!\!}}
\def\bR{\mathbb{R}}
\def\fL{\mathfrak{L}}
\def\cL{\mathcal{L}}
\begin{document}
\title[Elliptic equations with piecewise constant coefficients]{On the impossibility of $W_p^2$ estimates for elliptic equations with piecewise constant coefficients}

\author[H. Dong]{Hongjie Dong}
\address[H. Dong]{Division of Applied Mathematics, Brown University, 182 George Street, Providence, RI 02912, USA}
\email{Hongjie\_Dong@brown.edu}
\thanks{H. Dong was partially supported by NSF grant number DMS-1056737.}

\author[D. Kim]{Doyoon Kim}
\address[D. Kim]{Department of Applied Mathematics, Kyung Hee University, 1732 Deogyeong-daero, Giheung-gu, Yongin-si, Gyeonggi-do 446-701, Republic of Korea}
\email{doyoonkim@khu.ac.kr}
\thanks{D. Kim was supported by Basic Science Research Program through the National Research Foundation of Korea (NRF) funded by the Ministry of Science, ICT \& Future Planning (2011-0013960).}

\subjclass[2010]{35J15, 35R05, 35B45}

\keywords{elliptic equations with piecewise constant coefficients, $W_p^2$ estimates, counterexamples}

\begin{abstract}
In this paper, we present counterexamples showing that for any $p\in (1,\infty)$, $p\neq 2$, there is a non-divergence form uniformly elliptic operator with piecewise constant coefficients  in $\bR^2$ (constant on each quadrant in $\bR^2$) for which there is no $W^2_p$ estimate. The corresponding examples in the divergence case are also discussed.
One implication of these examples is that the ranges of $p$ are sharp in the recent results obtained in \cite{MR3033629,DongKim2013} for non-divergence type elliptic and parabolic equations in a half space with the Dirichlet or Neumann boundary condition when the coefficients do not have any regularity in a tangential direction.
\end{abstract}

\maketitle

\section{Introduction and main results}
We consider elliptic operators in non-divergence form
$$
\cL u = a^{ij}D_{ij} u,
$$
where
\begin{equation}
							\label{eq0217_1}
\delta |\xi|^2 \le a^{ij} \xi_i \xi_j,
\quad
|a^{ij}| \le \delta^{-1},
\quad
\delta \in (0,1].
\end{equation}

The $L_p$-theory of second-order elliptic and
parabolic equations with discontinuous coefficients has been studied extensively in the last fifty years. In the special case when the dimension $d=2$, it is well known that the $W^2_2$ estimate holds for uniformly elliptic operators with general bounded and measurable coefficients. See, for instance, \cite{MR1511579, MR0204845}.
On the other hand, a celebrated counterexample in \cite{MR0201816} and \cite{MR2260015} indicates that when $d \ge 3$ in general there is no $W^2_2$ estimate for elliptic operators with bounded measurable coefficients even if they are discontinuous only at a single point. Another example due to Ural'tseva \cite{MR0226179} (see also \cite{MR2667637}) shows the impossibility of the $W^2_p$ estimate when $d\ge 2$ and $p\neq 2$.
We note that in Ural'tseva's example, the coefficients are continuous except at a single point ($d=2$) or a line ($d=3$). In \cite{MR1612401}, Nadirashvili showed that the weak uniqueness for martingale problems may fail if coefficients are merely measurable and $d \ge 3$. These examples imply that in general there does not exist a solvability theory for uniformly elliptic operators with bounded and measurable coefficients. Thus many efforts have been made to treat particular types of discontinuous coefficients.

In \cite{MR0224996} Campanato extended the aforementioned result in \cite{MR1511579, MR0204845} to the case when $d=2$ and $p$ is in a neighborhood of $2$, the size of which depends on the ellipticity constant $\delta$. A corresponding result for parabolic equations can be found in \cite{MR0255954}.
By using explicit representation formulae, Lorenzi \cite{MR0296490, MR0382836} studied the $W^2_2$ and $W^2_p$, $1<p<\infty$, estimates for elliptic equations in $\bR^d$ with coefficients which are constant on each half space.
See \cite{MR0499720, MR2240183} for similar results for parabolic equations, and \cite{MR2276531} for elliptic equations in $\bR^d$ with leading coefficients discontinuous at finitely many parallel hyperplanes.
We also refer the reader to \cite{MR2338417, MR2678983, MR2667637, MR3033629, DongKim2013} and the references therein for some recent developments for equations with coefficients only measurable in some directions. In particular, it is proved in \cite{MR3033629} that the $W^2_p$ estimate holds for elliptic equations in a half space with the zero Dirichlet (or Neumann) boundary condition when coefficients are only measurable in a tangential direction to the boundary and $p\in (1,2]$ (or $p\in [2,\infty)$, respectively).

In this paper we focus our attention to elliptic equations  with {\em piecewise constant} coefficients in $\bR^2$.
In fact, the results in \cite{MR0382836, MR2338417} imply the  $W_p^2$, $1<p<\infty$, estimate for such equations if coefficients are constants on the upper half plane and another constants on the lower half plane.
On the other hand, as a special case of the results in \cite{MR3033629}, we have the $W_p^2$, $1<p\le 2$ (or $2 \le p < \infty$), estimate for equations defined in the upper half plane with the Dirichlet (or Neumann, respectively) boundary condition if coefficients are constants on the first quadrant and another constants on the second quadrant.
In view of these results, it is then natural to ask the following question: do we have the $W^2_p$ estimate for elliptic operators with piecewise constant coefficients which are constant on each quadrant in $\bR^2$? Note that this case is not covered by any counterexamples mentioned above.

%In this paper, we
The objective of this paper is to give a negative answer to this question for any $p\in (1,\infty)$ and $p\neq 2$.
To the best of our knowledge, this result is new.
By a simple argument, our counterexamples are extended to $\bR^d$, $d \ge 3$. For divergence form equations, a similar estimate cannot be expected either due to an example by Piccinini and Spagnolo \cite{MR0361422}; see Remark \ref{rem0217_1} below.
Regarding the weak uniqueness of martingale problems, we note that Bass and Pardoux \cite{MR917679} proved well-posedness for operators in $\bR^d$ with piecewise constant coefficients.

As the main result of this paper, we give counterexamples to the statements below.
The first one is a kind of interior estimates.

\begin{statement}
							\label{stat0206_1}
For any $u \in W_p^2(\bR^2)$,
$$
\|D^2u\|_{L_p(\bR^2)} \le N \|\cL u\|_{L_p(\bR^2)} + N \|u\|_{L_p(\bR^2)},
$$
where $N$ is independent of $u$.
\end{statement}

We set
$$
\cL_{\mu} = (1-\mu) \Delta + \mu \cL,
\quad
\mu \in [0,1].
$$

To find a unique solution $u \in W_p^2(\bR^2)$ to the equation $\cL u - \lambda u = f$ in $\bR^2$, where $f \in L_p(\bR^2)$, usually one uses the method of continuity and the solvability of a simple equation, for example, the Laplace equation.
For the method of continuity, we need the following a priori estimate.
Note that Statement \ref{stat0206_2} clearly implies Statement \ref{stat0206_1}.

\begin{statement}
							\label{stat0206_2}
There exists $\lambda> 0$ such that,
for any $u \in W_p^2(\bR^2)$ and $\mu \in [0,1]$,
$$
\lambda \|u\|_{L_p(\bR^2)} + \sqrt{\lambda}\|Du\|_{L_p(\bR^2)} + \|D^2u\|_{L_p(\bR^2)} \le N \| \cL_\mu u - \lambda u\|_{L_p(\bR^2)},
$$
where $N$ is independent of $u$ and $\mu$.
\end{statement}

Here is our main result of this paper.

\begin{theorem}                           \label{thm0206_1}
\mbox{}
\begin{enumerate}
\item[(i)] For any $p \in (2, \infty)$, there exists an elliptic operator $\cL$ in non-divergence form with coefficients constant on each quadrant in $\bR^2$ such that Statement \ref{stat0206_1} does not hold.

\item[(ii)] For any $p \in (1,2)$, there exists an elliptic operator $\cL$ in non-divergence form with coefficients constant on each quadrant in $\bR^2$ such that Statement \ref{stat0206_2} does not hold.

\end{enumerate}
\end{theorem}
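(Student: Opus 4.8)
The plan is to construct, for each $p \neq 2$, an explicit homogeneous solution of $\cL u = 0$ (or nearly so) whose second derivatives barely fail to be in $L_p$ locally near the origin, thereby violating the scaling-invariant a priori estimate. I would work in polar coordinates $(r,\theta)$ and look for a function of the separated form $u = r^k \phi(\theta)$, where the exponent $k$ is close to $2$ and $\phi$ is determined piecewise on the four quadrants. On each quadrant the operator $\cL$ has constant coefficients, so the equation $\cL u = 0$ reduces on each angular sector to a second-order ODE for $\phi$ with constant coefficients (after the standard change of variables), whose indicial equation links the admissible $k$ to the coefficients $a^{ij}$ in that quadrant. The free parameters are: the four choices of constant (elliptic) matrices on the quadrants, and the exponent $k$. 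One then imposes $C^1$ matching of $u$ across the four rays $\theta = 0, \pi/2, \pi, 3\pi/2$ (i.e. continuity of $\phi$ and of the conormal-type first derivative dictated by the coefficients), which gives a homogeneous linear system in the ODE constants; requiring a nontrivial solution yields a transcendental compatibility equation $F(k; \text{coefficients}) = 0$. The key point is to choose the coefficients so that the resulting $k$ satisfies $1 < k < 2$, so that $D^2 u \sim r^{k-2}$ is homogeneous of degree $k-2$ with $-1 < k-2 < 0$; then $\lvert D^2 u\rvert^p \sim r^{(k-2)p}$ is locally integrable in $\bR^2$ precisely when $(k-2)p > -2$, i.e. when $p < 2/(2-k)$, and one arranges $k$ so that this fails for the target range of $p$.

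Concretely, for part (i), $p \in (2,\infty)$: choose the coefficients (symmetrizing $\cL$ as a purely second-order operator with no lower-order terms) so that the matching equation forces $k = k(p) \in (1,2)$ with $k \le 2 - 2/p$, so that $u \in W^2_{p_0}$ for $p_0$ slightly below $p$ but $D^2 u \notin L_p$ near the origin; multiplying $u$ by a smooth cutoff $\eta$ supported near $0$ gives $u_0 = \eta u \in W^2_{p_0}(\bR^2) \subset W^1_p \cap L_p$ (choosing $k$ also $> 2 - 2/p$ is not needed — one only needs the lower-order terms to be fine, which follows since $k > 1$ gives $Du \in L_p$ locally and $u$ bounded), while $\cL u_0 = \eta \cL u + (\text{commutator terms supported away from }0)$ is bounded and compactly supported, hence in $L_p$, yet $\|D^2 u_0\|_{L_p(\bR^2)} = \infty$. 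This directly contradicts Statement \ref{stat0206_1}. I would need to double-check that the cutoff does not destroy the global $W^2_p$-membership needed to even apply the statement: a cleaner route is to take $u_0 \in W^2_{p_0}(\bR^2)$ for some $p_0 \in (1,2)$ and then note that the estimate in Statement \ref{stat0206_1}, if true, combined with a density/approximation argument would still yield a contradiction, or simply truncate $u$ at two scales $r \in (\varepsilon, 1)$ and let $\varepsilon \to 0$ to see the constant $N$ must blow up.

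For part (ii), $p \in (1,2)$: here Statement \ref{stat0206_2} also controls $\lambda\|u\|_{L_p} + \sqrt\lambda \|Du\|_{L_p}$ with a $\lambda$-uniform constant, which is the estimate underlying the method of continuity along the family $\cL_\mu$. I would exploit the same separated solutions but now use a duality/scaling argument: the failure for $p \in (1,2)$ should be dual to the failure for the conjugate exponent $p' \in (2,\infty)$ for the (formal) adjoint operator, which again has piecewise constant coefficients constant on each quadrant. Alternatively, and more in the spirit of the $\lambda$-uniform statement, one rescales: if the estimate held for some $\lambda$, then by the dilation $u(x) \mapsto u(Rx)$ one would get it for all $\lambda R^2$, hence effectively for $\lambda = 0$, reducing part (ii) to constructing a homogeneous (approximate) solution with $D^2 u$ homogeneous of degree $k - 2$ for a $k$ chosen so that the $L_p$ norm on an annulus, summed over dyadic scales, diverges — again governed by the inequality $(k-2)p \le -2$, i.e. $k \le 2 - 2/p$, which for $p \in (1,2)$ forces $k \le 1$. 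A homogeneity exponent $k \le 1$ means $u$ itself is only Lipschitz (or worse) at the origin and $Du$ is singular, so one must be careful that $u \in W^2_p(\bR^2)$ still makes sense; the remedy is again a two-scale truncation producing genuine $W^2_p$ functions (with norms depending on the cutoff) and tracking the blow-up of both sides.

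The main obstacle I anticipate is the \emph{construction step}: solving the transcendental matching equation $F(k;\text{coefficients}) = 0$ and verifying that the coefficient matrices producing the desired root $k(p)$ are genuinely uniformly elliptic, i.e. satisfy \eqref{eq0217_1} for some $\delta \in (0,1]$. Controlling where the root $k(p)$ lands as a function of the (four) coefficient matrices, and in particular covering \emph{every} $p \neq 2$ with an admissible elliptic choice rather than just some $p$, will require a careful continuity/monotonicity analysis of $F$ and an explicit, ideally symmetric, ansatz for the four matrices (for instance, taking them to be rotations or reflections of a single anisotropic matrix, so that only one or two real parameters remain and $F$ becomes a concrete expression in $k$ and those parameters). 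The secondary obstacle — making the near-solution an honest element of $W^2_p(\bR^2)$ and handling the cutoff commutator and, in part (ii), the $\lambda$-terms — is routine by the truncation argument sketched above but must be written with care so that the constant $N$ is shown to be forced to be infinite.
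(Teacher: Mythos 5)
Your overall intuition is right—the failure of the $W^2_p$ estimate should come from a corner-type singularity at the origin, realized by a (near-)homogeneous solution $u\sim r^k$ whose second derivatives sit at the $L_p$-integrability threshold, followed by a two-scale truncation so that the cutoff commutator is uniformly bounded while $\|D^2u_n\|_{L_p}^p\sim \ln n$. But what you flag as the ``main obstacle'' (actually producing the coefficients and the exponent $k$ simultaneously, while keeping uniform ellipticity and covering every $p\neq 2$) is not a side technicality: it is the entire content of the proof, and it is precisely where your route and the paper's diverge.

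For part (i), the paper does not solve a four-ray matching problem at all. It takes the explicit harmonic function $v(r,\theta)=r^{\pi/\omega}\sin(\pi\theta/\omega)$ on the single sector $\Omega_\omega$ with $\omega=\pi p/(2p-2)\in(\pi/2,\pi)$, for which $\Delta v=0$, $v=0$ on $\partial\Omega_\omega$, and $D^2v\sim r^{\pi/\omega-2}$ is exactly critical for $L_p$ (Lemma~\ref{lem0425-2}); then it maps $\Omega_\omega$ to the quadrant $\Xi$ by a single shear $x\mapsto Ax$ (Lemma~\ref{lem0425-1}), turning $\Delta$ into a constant-coefficient operator $a^{ij}D_{ij}$ on $\Xi$; and then it does two odd reflections (in $x_2$, then in $x_1$), flipping the sign of the off-diagonal coefficients on each reflection, to produce a genuine operator $\tilde a^{ij}D_{ij}$ with coefficients constant on each quadrant of $\bR^2$ and with $\tilde a^{11},\tilde a^{22}$ globally constant. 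No transcendental compatibility equation $F(k;\cdot)=0$ is ever solved; the freedom lies entirely in the choice of $\omega$, and the coefficients are written down explicitly. Your separated-variables ansatz could conceivably reproduce this in some form, but as written it is a plan, not a proof, and the hard part is exactly the part you postpone.

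For part (ii), the gap is more serious. Your primary strategy is to rescale away the zeroth-order term and then build a singular solution with exponent $k\le 2-2/p<1$. The rescaling step is fine (the coefficients are dilation invariant since quadrants are cones), but the sector-plus-linear-map device from part (i) cannot produce such a $k$: one would need a sector with opening $\omega>\pi$, and a non-convex sector cannot be mapped onto a (convex) quadrant by any linear transformation, so Lemma~\ref{lem0425-1} is unavailable. This is precisely why the paper abandons the direct construction for $p\in(1,2)$ and instead proves a divergence-form version of (i) for $q=p/(p-1)>2$ (Theorem~\ref{thm0217_1}, obtained by differentiating the non-divergence example in $x_2$) and then argues by \emph{duality}: if Statement~\ref{stat0206_2} held, the $\mu$-uniformity would, via the method of continuity, give unique solvability of $\cL u-\lambda u=f$ for all $f\in L_p$; then $v=D_1u$ would solve a divergence-form equation, and pairing it against the divergence-form counterexample $v_n$ from Theorem~\ref{thm0217_1} yields a uniform $W^1_q$ bound on $D_1v_n$ that is known to fail. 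You mention duality only as an aside (``should be dual to the failure for $p'$''), but you neither identify that solvability (not just the a priori estimate) is needed to run the pairing, nor that this is exactly what the $\mu$-uniformity in Statement~\ref{stat0206_2}—as opposed to the weaker Statement~\ref{stat0206_1}—buys you. Indeed, for $p\in(1,2)$ the paper does \emph{not} claim that Statement~\ref{stat0206_1} fails, and a direct four-quadrant homogeneous construction with $0<k<1$ would prove something stronger than what the theorem asserts; whether such a construction is even possible for uniformly elliptic piecewise-constant coefficients is not established by your sketch.

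In short: the heuristic (critical homogeneity $(k-2)p=-2$, truncation, log blow-up) is correct and matches the paper's, but (i) the explicit construction via Lemma~\ref{lem0425-2}, the shear of Lemma~\ref{lem0425-1}, and the double odd reflection is absent from your plan, and (ii) your route for $p<2$ runs into the non-convexity obstruction and would need to be replaced by the divergence-form duality argument (Section~\ref{sec3} and Section~\ref{sec4}) that the paper actually uses.
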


Theorem \ref{thm0206_1} can be used to construct a counterexample for $d \ge 3$ and $p \in (1,\infty)$, $p \ne 2$.
Indeed, we can proceed as follows when $p \in (1,2)$. If Statement \ref{stat0206_2} with $\bR^d$ in place of $\bR^2$ were satisfied by the operator
$$
\cL^d : = \cL + \Delta_{d-2} = \sum_{i,j=1}^2a^{ij}D_{ij} + \Delta_{d-2},
$$
where $a^{ij}$ are from Theorem \ref{thm0206_1} (ii), then, for functions
$$
v_n (x) := u(x_1,x_2) \varphi_n(x_3,\ldots,x_d),
$$
where $u \in W_p^2(\bR^2)$,
we would have
\begin{align*}
&\| \varphi_n\|_{L_p(\bR^{d-2})}\big(\lambda\|u\|_{L_p(\bR^2)}+\sqrt\lambda \|Du\|_{L_p(\bR^2)}+\|D^2 u\|_{L_p(\bR^2)}\big)\\
&\le N \| \varphi_n\|_{L_p(\bR^{d-2})}\| \cL_\mu u - \lambda u\|_{L_p(\bR^2)}
+ N \|\Delta_{d-2}\varphi_n\|_{L_p(\bR^{d-2})}\|u\|_{L_p(\bR^2)}.
\end{align*}
If we choose $\varphi_n(x)$ to be
$$
\varphi_n(x) = \varphi( x/n ),
\quad
\varphi \in C_0^\infty(\bR^{d-3}),
$$
then dividing both sides of the above inequality by $\| \varphi_n\|_{L_p(\bR^{d-2})}$ and letting $n \to \infty$, we would arrive at the estimate \eqref{stat0206_2} for the operator $\cL$ in $\bR^2$, which is a contradiction to Theorem \ref{thm0206_1} (ii).
Similar argument applies to the case $p\in (2,\infty)$.

\begin{remark}
It follows from Theorem \ref{thm0206_1} (ii) that, for any $p \in (1,2)$ and $\lambda \in (0,\infty)$, there exists an elliptic operator $\cL$ with coefficients constant on each quadrant in $\bR^2$ and a number $\mu_0 \in (0,1]$ such that the following statement {\em does not} hold: for any $u \in W_p^2(\bR^2)$,
\begin{equation}
                                \label{eq4.06}
\lambda\|u\|_{L_p(\bR^2)}+\sqrt \lambda \|Du\|_{L_p(\bR^2)}+\|D^2u\|_{L_p(\bR^2)} \le N \| \cL_{\mu_0} u - \lambda u\|_{L_p(\bR^2)},
\end{equation}
where $N$ is independent of $u$.
To see this, for a given $p \in (1,2)$, take the elliptic operator $\cL$ with piecewise constant coefficients from Theorem \ref{thm0206_1} (ii). Then by Theorem \ref{thm0206_1} (ii), for each positive integer $k$, one can find $\mu_k\in [0,1]$ and $u_k\in W^2_p(\bR^2)$ such that
\begin{equation}
                        \label{eq4.08}
\lambda \|u_k\|_{L_p(\bR^2)} + \sqrt{\lambda}\|Du_k\|_{L_p(\bR^2)} + \|D^2u_k\|_{L_p(\bR^2)} > k \| \cL_{\mu_k} u_k - \lambda u_k\|_{L_p(\bR^2)}.
\end{equation}
After taking a subsequence, we may assume that $\mu_k\to \mu_0$ as $k\to \infty$ for some $\mu_0\in [0,1]$.
Suppose that \eqref{eq4.06} holds for this $\mu_0$ and any $u\in W^2_p(\bR^2)$. Then for $k>2N$ sufficiently large such that $|\mu_k-\mu_0|\le 1/(2N)$, by the triangle inequality, we have
\begin{align*}
&\lambda\|u\|_{L_p(\bR^2)}+\sqrt \lambda \|Du\|_{L_p(\bR^2)}+\|D^2u\|_{L_p(\bR^2)} \\
&\,\le N \| \cL_{\mu_k} u - \lambda u\|_{L_p(\bR^2)}+\frac 1 2 \|D^2u\|_{L_p(\bR^2)},
\end{align*}
which contradicts with \eqref{eq4.08}.
\end{remark}

The remaining part of the paper is organized as follows. We give the proof of the case when $p\in (2,\infty)$, in the next section. In Section \ref{sec3}, we treat elliptic equations in divergence form. Finally, in Section \ref{sec4} we prove Theorem \ref{thm0206_1} in the case $p\in (1,2)$ by using the result in Section \ref{sec3} and a duality argument.

\section{Proof of Theorem \ref{thm0206_1} ({\rm i})}

In this section we prove Theorem \ref{thm0206_1} (i) by constructing a sequence of $\{u_n\} \subset W_p^2(\bR^2)$, $p>2$, such that the $L_p$-norms of $\cL u_n$ and $u_n$ are uniformly bounded, but those of $D^2 u_n$ are unbounded.

First we give some notation and two key, but simple, lemmas which are used in the rest of the paper.
Set
\begin{align*}
\Omega_{\omega} &= \{ (r\cos \theta, r \sin \theta) \in \bR^2,  0 < r < \infty, \theta \in (0, \omega)\},\\
\Xi &= \{ (x_1,x_2) \in \bR^2, x_1>0, x_2>0 \},\\
\bR^{2}_+&=\{x = (x_1,x_2) \in \bR^2: x_1 > 0 \}.
\end{align*}
In the sequel, we assume $p\in (1,\infty)$.

\begin{lemma}
							\label{lem0425-1}
Let $\omega \in (0, \pi)$. Then there exists a linear transformation $T$ from $\Omega_\omega$ to the first quadrant $\Xi$ of $\bR^2$.
\end{lemma}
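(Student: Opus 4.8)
The plan is to reduce the problem of mapping a sector $\Omega_\omega$ with opening angle $\omega\in(0,\pi)$ onto the first quadrant $\Xi$ to a simple bit of linear algebra. First I would fix the two rays bounding $\Omega_\omega$: the positive $x_1$-axis, with direction $e_1=(1,0)$, and the ray in direction $\vc_\omega=(\cos\omega,\sin\omega)$. Likewise the first quadrant $\Xi$ is bounded by the rays in directions $e_1=(1,0)$ and $e_2=(0,1)$. The natural candidate is the unique linear map $T$ determined by prescribing $Te_1=e_1$ and $T\vc_\omega=e_2$; since $\omega\in(0,\pi)$, the vectors $e_1$ and $\vc_\omega$ are linearly independent (they are not parallel, as $\sin\omega\neq 0$), so such a $T$ exists and is invertible, with matrix obtained by expressing the target basis in terms of the source basis.

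Next I would check that this $T$ actually carries $\Omega_\omega$ onto $\Xi$ and not onto some other sector. The point is that $\Omega_\omega$ is exactly the set of nonnegative linear combinations $s\,e_1+t\,\vc_\omega$ with $s,t\ge 0$ not both zero (this is the open convex cone spanned by the two boundary rays, which coincides with $\Omega_\omega$ precisely because $\omega<\pi$, so the cone is ``less than a half-plane''). Applying $T$ sends $s\,e_1+t\,\vc_\omega$ to $s\,e_1+t\,e_2$, and as $(s,t)$ ranges over the open first quadrant of parameter space this sweeps out exactly $\Xi$. Hence $T(\Omega_\omega)=\Xi$. I would spell out the matrix of $T$ explicitly: writing $\vc_\omega=(\cos\omega,\sin\omega)$, one finds
$$
T=\begin{pmatrix} 1 & -\cot\omega \\ 0 & 1/\sin\omega \end{pmatrix},
$$
which indeed fixes $(1,0)$ and sends $(\cos\omega,\sin\omega)$ to $(0,1)$, and is invertible since $\det T=1/\sin\omega\neq 0$ for $\omega\in(0,\pi)$.

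There is essentially no serious obstacle here; the only thing to be a little careful about is the hypothesis $\omega<\pi$, which is what guarantees that the cone spanned by the two boundary rays is genuinely a sector of opening $\omega$ (for $\omega=\pi$ it would degenerate to a half-plane whose boundary is a full line, and for $\omega>\pi$ the cone description would fail). For the intended application, what really matters is that $T$ is linear and invertible, so that it transforms a constant-coefficient elliptic operator on $\Xi$ back to a constant-coefficient elliptic operator on $\Omega_\omega$ (with ellipticity constants controlled by those of $T$), and transforms $W_p^2$ functions to $W_p^2$ functions with comparable norms; I would note these consequences in passing since they are the reason the lemma is stated, but the proof itself is just the construction and verification of $T$ above.
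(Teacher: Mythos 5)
Your proposal is correct and follows essentially the same approach as the paper: both construct a linear (shear) map that fixes the positive $x_1$-axis and sends the boundary ray at angle $\omega$ onto the positive $x_2$-axis. The paper simply writes down the matrix $\begin{pmatrix}1 & -\cot\omega \\ 0 & 1\end{pmatrix}$ (which sends $(\cos\omega,\sin\omega)$ to $(0,\sin\omega)$ rather than $(0,1)$, but that makes no difference for mapping cones), while your version normalizes the image to $e_2$ and spells out the cone argument in more detail; the two matrices differ only by a positive rescaling of the second row, and both are valid.
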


\begin{proof}
Set $T(x) = Ax$, where $x \in \bR^2$ and
$$
A =
\left[
\begin{matrix}
1 & -\cot \omega \\
0 & 1
\end{matrix}
\right].
$$
\end{proof}

In the lemma below we use the imaginary part of a holomorphic function, which can also be found, for instance, in \cite[Page x]{Gr85} as an example illustrating the loss of smoothness at a corner point.

\begin{lemma}
							\label{lem0425-2}
For any $p > 2$, there exist a constant $\omega \in (\pi/2,\pi)$ and a function $v \in W_2^2(B_\tau \cap \Omega_\omega) \cap C^\infty_{\text{loc}}(\overline{\Omega_\omega} \setminus 0)$ such that
\begin{equation*}
%							\label{eq0425-1}
\Delta v = 0
\quad
\text{in}
\,\,\,
\Omega_\omega,
\quad
v = 0
\quad
\text{on}
\,\,\,
\partial \Omega_\omega,
\end{equation*}
and $v, Dv \in L_p(B_\tau \cap \Omega_\omega)$,
but $D^2 v \notin L_p(B_\tau \cap \Omega_\omega)$ for any $\tau \in (0,\infty)$.
\end{lemma}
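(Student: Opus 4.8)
The plan is to produce an explicit harmonic function on a wedge $\Omega_\omega$ vanishing on the two rays bounding the wedge, and to choose the opening angle $\omega$ so that its second derivatives just barely fail to be $L_p$-integrable near the vertex. The natural candidate is the imaginary part of $z^{\alpha}$ in polar coordinates, i.e.
\begin{equation*}
v(r,\theta) = r^{\alpha}\sin(\alpha\theta),
\end{equation*}
which is harmonic for any $\alpha$ (being $\operatorname{Im} z^\alpha$ on $\Omega_\omega$, where the principal branch is single-valued since $\omega<\pi$ forces $\theta\in(0,\omega)\subset(0,\pi)$), vanishes on $\theta=0$, and vanishes on $\theta=\omega$ precisely when $\alpha\omega = \pi$, i.e. $\alpha = \pi/\omega$. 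Since we want $\omega\in(\pi/2,\pi)$, this forces $\alpha\in(1,2)$. The choice of $\alpha$ (equivalently $\omega$) will be pinned down by the integrability requirements below.

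Next I would check the integrability of $v$ and its derivatives. In polar coordinates $|D^k v|$ behaves like $r^{\alpha-k}$ up to bounded angular factors, and $dx = r\,dr\,d\theta$, so $\int_{B_\tau\cap\Omega_\omega}|D^k v|^p\,dx$ converges near $r=0$ exactly when $(\alpha-k)p + 1 > -1$, i.e. $\alpha - k > -2/p$. For $k=0$ and $k=1$ this is automatic since $\alpha>1>1-2/p$ and $\alpha - 1 > 0 > -2/p$. For $k=2$ we need the opposite: we want $D^2 v\notin L_p$, i.e. $\alpha - 2 \le -2/p$, equivalently $\alpha \le 2 - 2/p$; taking strict inequality $\alpha < 2 - 2/p$ is cleanest. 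Since $p>2$ we have $2-2/p\in(1,2)$, so the window $\alpha\in(1,\,2-2/p)$ is nonempty; pick any such $\alpha$ and set $\omega = \pi/\alpha$, which then lies in $(\pi/2,\pi)$ as required. One should also note that $v\in W_2^2(B_\tau\cap\Omega_\omega)$ because $\alpha>1$ gives $\alpha-2>-1$ so $(\alpha-2)\cdot 2 + 1 = 2\alpha-3 > -1$, and smoothness away from the origin is clear since $z\mapsto z^\alpha$ is holomorphic on $\overline{\Omega_\omega}\setminus\{0\}$.

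Finally I would assemble these observations: fix $p>2$, choose $\alpha\in(1,2-2/p)$, set $\omega=\pi/\alpha\in(\pi/2,\pi)$, and let $v = \operatorname{Im} z^\alpha = r^\alpha\sin(\alpha\theta)$. Then $\Delta v = 0$ in $\Omega_\omega$, $v=0$ on $\partial\Omega_\omega$, $v,Dv\in L_p(B_\tau\cap\Omega_\omega)$ for every $\tau\in(0,\infty)$ (the integrals over $B_\tau\setminus B_1$, say, are finite because $v$ is smooth there, and near the origin the exponent conditions above hold), while $D^2 v\notin L_p(B_\tau\cap\Omega_\omega)$ because the leading term of $\partial_{rr}v$ is a nonzero multiple of $r^{\alpha-2}\sin(\alpha\theta)$ and $(\alpha-2)p+1\le -1$. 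The only mild subtlety — not really an obstacle — is to make the "not in $L_p$" claim rigorous: one must confirm the angular factor $\sin(\alpha\theta)$ is not identically zero on $(0,\omega)$ and does not conspire with the other second-derivative components to cancel the $r^{\alpha-2}$ singularity; this is immediate from computing, e.g., $\partial_{rr}v = \alpha(\alpha-1)r^{\alpha-2}\sin(\alpha\theta)$ and integrating over a fixed angular subinterval where $\sin(\alpha\theta)$ is bounded away from $0$. This verification is the main (and essentially the only) point requiring care.
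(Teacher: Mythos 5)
Your proof is correct and takes essentially the same approach as the paper: the paper also uses $v(r,\theta)=r^{\pi/\omega}\sin(\tfrac{\pi}{\omega}\theta)$ and the exponent-counting argument, the only difference being that it picks the borderline value $\omega=\tfrac{\pi p}{2p-2}$ (i.e.\ $\alpha=2-2/p$, so that $(\alpha-2)p+1=-1$ and the integral diverges logarithmically) whereas you allow any $\alpha\in(1,2-2/p)$, which works just as well.
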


\begin{proof}
For later reference in this paper, we give a proof here.
For a given $p>2$, set
$$
\omega = \frac{\pi p}{2p-2} \in (\pi/2, \pi),
\quad
v(r,\theta) = r^{\pi/\omega} \sin \left( \frac{\pi}{\omega} \theta \right).
$$
Then $\Delta v = 0$ in $\Omega_{\omega}$ and $v = 0$ on $\partial \Omega_\omega$.
Moreover, we have
$$
D v \sim r^{\pi/\omega - 1},
\quad
D^2 v \sim r^{\pi/\omega- 2}.
$$
This shows that $v, Dv \in L_p(B_\tau \cap \Omega_\omega)$, $D^2 v \notin L_p(B_\tau \cap \Omega_\omega)$, $\tau \in (0,\infty)$. Indeed,
$$
\int_{B_\tau \cap \Omega_\omega} |D^2 v|^p \, dx \, dy
\sim \int_0^\tau r^{p \left( \pi/\omega - 2 \right)+1} \, dr
= \infty
$$
since $p ( \pi/\omega - 2 )+1 = -1$.
\end{proof}

\begin{proof}[Proof of Theorem \ref{thm0206_1} (i)]
Let $\eta(r)$ be an infinitely differentiable function defined in $\bR$ such that
$$
\eta(r) = 0
\quad
\text{if}
\,\,\,
r \le 0,
\quad
\eta(r) = 1
\quad
\text{if}
\,\,\,
r \ge 1.
$$
Then set
$$
\zeta_n(x) = \eta\left( n ( |x| - 1/n) \right) \eta (3 - |x|),
\quad n =1, 2, \ldots,
$$
which has the following properties.
$$
\zeta_n(x) =
\left\{
\begin{aligned}
0
\quad
&\text{if}
\,\,\,
|x| \le 1/n
\,\,\,
\text{or}
\,\,\,
|x| \ge 3,
\\
1
\quad
&\text{if}
\,\,\,
2/n \le |x| \le 2,
\end{aligned}
\right.
$$
$$
| D \zeta_n (x) | \le N n,
\quad
| D^2 \zeta_n(x)| \le N n^2,
$$
where $N$ is a constant independent of $n$.
For a given $p>2$, take $\omega \in (\pi/2,\pi)$ and $v$ from Lemma \ref{lem0425-2}, and set
$$
v_n(x) = v(x) \zeta_n(x),
$$
which satisfies $v_n \in W_p^2(\Omega_\omega)$, $v_n = 0$ on $\partial \Omega_\omega$, and
$$
\Delta v_n = 2 (D_1 v D_1 \zeta_n + D_2 v D_2 \zeta_n) + v \Delta \zeta_n =: h_n
$$
in $\Omega_\omega$.
From direction calculations, we see that $\|v_n\|_{L_p(\Omega_\omega)}$ and $\|h_n\|_{L_p(\Omega_\omega)}$
are uniformly bounded independent of $n$, but
\begin{equation}
							\label{eq0218_1}
\|D^2 v_n\|_{L_p(\Omega_\omega)}^p \sim \ln n \to \infty
\end{equation}
as $n \to \infty$.

Now we repeat the argument in Remark 3.2 in \cite{MR3033629}.
By applying the linear transform from Lemma \ref{lem0425-1} to the equation $\Delta v_n = h_n$ in $\Omega_\omega$, we obtain constant coefficients $a^{ij}$ and
functions $u_n \in W_p^2(\Xi)$, $f_n \in L_p(\Xi)$ satisfying
$$
a^{ij} D_{ij} u_n = f_n
\quad
\text{in}
\,\,\,
\Xi,
\quad
u_n = 0
\quad
\text{on}
\,\,\,
\partial \Xi.
$$
Just for reference, here are the explicit values of the coefficients $a^{ij}$.
$$
\left[ a^{ij} \right]
= \left[\begin{array}{cc}1+ \cot^2 \omega & - \cot \omega \\- \cot \omega & 1\end{array}\right].
$$

Now we extend the equation into one defined in $\bR^2_+ = \{x \in \bR^2: x_1>0\}$ by using odd / even extensions of $u_n$,$f_n$, and the coefficients with respect $x_2$. Precisely, set
$$
\bar{u}_n(x_1,x_2) =
\left\{
\begin{aligned}
u_n(x_1,x_2),
\quad x_2 > 0,
\\
-u_n(x_1,-x_2),
\quad x_2 < 0,
\end{aligned}
\right.
$$
$$
\bar{f}_n(x_1,x_2) =
\left\{
\begin{aligned}
f_n(x_1,x_2),
\quad x_2 > 0,
\\
-f_n(x_1,-x_2),
\quad x_2 < 0,
\end{aligned}
\right.
$$
and $\bar{a}^{11} = \bar{a}^{11}$, $\bar{a}^{22} = \bar{a}^{22}$,
$$
\bar{a}^{12} = \bar{a}^{21}(x_1,x_2) =
\left\{
\begin{aligned}
a^{12},
\quad x_2 > 0,
\\
-a^{12},
\quad x_2 < 0.
\end{aligned}
\right.
$$
It then follows that $\bar{u}_n \in W_p^2(\bR^2_+)$, $\bar{f}_n \in L_p(\bR^2_+)$, and
$$
\bar{a}^{ij} D_{ij} \bar{u}_n = \bar{f}_n
\quad
\text{in}
\,\,\,
\bR^2_+,
\quad
\bar{u} = 0
\quad
\text{on}
\,\,\,
\partial \bR^2_+.
$$
Finally, we extend this equation to one defined in $\bR^2$ using similar extensions (now with respect to $x_1$) as above so that we have
$\tilde{u}_n \in W_p^2(\bR^2)$, $\tilde{f}_n \in L_p(\bR^2)$, and
$$
\tilde{a}^{ij} D_{ij} \tilde{u}_n = \tilde{f}_n
\quad
\text{in}
\,\,\,
\bR^2.
$$
However, by recalling
\eqref{eq0218_1} as well as the fact that the $L_p$-norms of $v_n$ and $h_n$
are uniformly bounded independent of $n$, and keeping track of the extensions performed to construct $\tilde{u}_n$ and $\tilde{f}_n$, we conclude that
there is no constant $N$ satisfying the inequality in Statement \ref{stat0206_1} for the sequence $\{ \tilde{u}_n\} \subset W_p^2(\bR^2)$ if $\cL = \tilde{a}^{ij}D_{ij}$.
\end{proof}

\begin{remark}
							\label{rem0218_1}
Note that in the proof above the coefficients $a^{ii}$ and $\bar{a}^{ii}$, $i = 1, 2$, are extended only evenly.
Thus the coefficients $\tilde{a}^{ii}$, $i=1,2$, are constant functions in $\bR^2$. In particular, $\tilde{a}^{22} = 1$.
\end{remark}

\begin{remark}
%							\label{rem0218_2}
Considering $\bar{u}_n$ and $\bar{f}_n$ in the proof of Theorem \ref{thm0206_1} ({\rm i}), it is clear that, for any $p \in (2, \infty)$, there exists an elliptic operator $\cL$ in non-divergence form with coefficients constant on each quadrant in $\bR^2_+$ such that the following estimate, which is a version of Statement \ref{stat0206_1} for $\bR^2_+$, {\em does not} hold: for any $u \in W_p^2(\bR^2_+)$ with $u = 0$ on $\partial \bR^2_+$,
$$
\|D^2 u\|_{L_p(\bR^2_+)}
\le N \|\cL u\|_{L_p(\bR^2_+)} + N \|u\|_{L_p(\bR^2_+)},
$$
where $N$ is independent of $u$.

On the other hand, in \cite{MR3033629} and \cite{DongKim2013}, for $p \in (1,2]$, we obtained an estimate as in Statement \ref{stat0206_2} for non-divergence type elliptic and parabolic equations in $\bR^d_+$ with the Dirichlet boundary condition when the coefficients do not have any regularity in a tangential direction.
This certainly includes the coefficients in the proof of Theorem \ref{thm0206_1} (i).
Therefore, the range of $p$ in the results of \cite{MR3033629, DongKim2013} for the Dirichlet case is sharp.
See also Remark \ref{rem0219_1} below for the Neumann case.
\end{remark}

\section{Divergence case}
                        \label{sec3}

In this section we prove a version of Theorem \ref{thm0206_1} (i) for divergence type equations with piecewise constant coefficients in $\bR^2$, which serves as an important step toward the second assertion of the main theorem (Theorem \ref{thm0206_1}).

Set
$$
\fL u = D_i ( a^{ij} D_j u )
$$
to be an operator in divergence form, where $a^{ij}$ satisfy an ellipticity condition as in \eqref{eq0217_1}.
%Also set
%$$
%\fL_{\mu} = (1-\mu) \Delta + \mu \fL, \quad \mu \in [0,1].
%$$

\begin{statement}
							\label{stat0217_1}
For any $u \in W_p^1(\bR^2)$, $g = (g_1,g_2) \in L_p(\bR^2)$, and $f \in L_p(\bR^2)$ satisfying
$$
\fL u = D_i g_i + f
$$
in $\bR^2$, we have
$$
\|Du\|_{L_p(\bR^2)} \le N \|g\|_{L_p(\bR^2)} + N \|f\|_{L_p(\bR^2)} + N \|u\|_{L_p(\bR^2)},
$$
where $N$ is independent of $u$, $g$, and $f$.
\end{statement}

Here is a version of Theorem \ref{thm0206_1} (i) for divergence type equations.

\begin{theorem}                           \label{thm0217_1}
For any $p \in (2, \infty)$, there exists an elliptic operator $\fL$ in divergence form with coefficients constant on each quadrant in $\bR^2$ such that Statement \ref{stat0217_1} does not hold.
\end{theorem}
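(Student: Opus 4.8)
The plan is to mimic the construction from the proof of Theorem \ref{thm0206_1} (i), replacing the non-divergence harmonic counterexample by a divergence-form one. First I would observe that the harmonic function $v(r,\theta) = r^{\pi/\omega}\sin(\frac{\pi}{\omega}\theta)$ from Lemma \ref{lem0425-2} already solves a \emph{divergence}-form equation, namely $\Div(\nabla v) = 0$, with $v \in W_2^1$, $Dv \in L_p$ but $D^2 v \notin L_p$ near the corner, for $p > 2$ and $\omega = \frac{\pi p}{2p-2} \in (\pi/2,\pi)$. Cutting off with the same functions $\zeta_n$, I set $v_n = v\zeta_n$, so that
$$
\Div(\nabla v_n) = \Delta v_n = 2\nabla v \cdot \nabla \zeta_n + v\,\Delta\zeta_n =: h_n = D_i g_{n,i} + \tilde h_n
$$
where I can take $g_{n,i} = 2\, v\, D_i\zeta_n$ (so that $D_i g_{n,i}$ absorbs the worst term) and $\tilde h_n = -2 v\,\Delta\zeta_n - \cdots$; in any case, because $v$ and $Dv$ are bounded on the annulus $1/n \le |x| \le 2/n$ up to the $r^{\pi/\omega}$ and $r^{\pi/\omega-1}$ scaling and $|D\zeta_n| \le Nn$, $|D^2\zeta_n| \le Nn^2$, one checks by the same computation as in Section 2 that $\|v_n\|_{L_p}$, $\|g_n\|_{L_p}$, $\|\tilde h_n\|_{L_p}$ stay bounded in $n$, while $\|Dv_n\|_{L_p}^p \sim \ln n \to \infty$ — this is the divergence-form analogue of \eqref{eq0218_1}, now one derivative lower.

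Next I would push this through the transplantation machinery already set up. Apply the linear map $T$ of Lemma \ref{lem0425-1} to turn $\Omega_\omega$ into the first quadrant $\Xi$; since $T$ is linear, $\Div(\nabla v_n) = D_i g_{n,i} + \tilde h_n$ becomes a divergence-form equation $D_i(a^{ij}D_j u_n) = D_i \hat g_{n,i} + \hat f_n$ in $\Xi$ with the \emph{same} constant matrix $[a^{ij}] = \begin{bmatrix} 1 + \cot^2\omega & -\cot\omega \\ -\cot\omega & 1\end{bmatrix}$ as in Section 2 (the divergence structure is preserved under linear changes of variables, with $a = (\det A)^{-1} A A^T$ up to a harmless constant), and $u_n = 0$ on $\partial\Xi$, $u_n \in W_p^1(\Xi)$, $\hat g_n, \hat f_n \in L_p(\Xi)$, all norms behaving as before. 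Then extend oddly in $x_2$ and then oddly in $x_1$: the odd reflection of $u_n$, of $\hat f_n$, and of the $g$-components with the appropriate signs, together with the even/odd reflection of the coefficients (even for $a^{11}, a^{22}$, odd for $a^{12} = a^{21}$, exactly as in Remark \ref{rem0218_1}), produces $\tilde u_n \in W_p^1(\bR^2)$, $\tilde g_n, \tilde f_n \in L_p(\bR^2)$ solving $\fL \tilde u_n = D_i \tilde g_{n,i} + \tilde f_n$ in all of $\bR^2$ with $\fL = D_i(\tilde a^{ij} D_j \cdot)$, the coefficients being constant on each quadrant. One has to check that the reflected $\tilde u_n$ is genuinely a weak solution across the axes — this is the standard fact that odd reflection matches Dirichlet data and the sign flip of $a^{12}$ makes the conormal derivatives agree in the sense of distributions; the cutoff $\zeta_n$ vanishing near the origin makes this clean. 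Since $\|\tilde u_n\|_{L_p}$, $\|\tilde g_n\|_{L_p}$, $\|\tilde f_n\|_{L_p}$ are uniformly bounded while $\|D\tilde u_n\|_{L_p} \to \infty$, Statement \ref{stat0217_1} fails for this $\fL$.

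The main obstacle I anticipate is bookkeeping rather than conceptual: verifying that after the linear transformation the equation retains \emph{pure} divergence form with a symmetric uniformly elliptic constant matrix on the quadrant (so the reflection argument applies verbatim), and that the reflected functions satisfy the equation weakly through the coordinate axes with no spurious distributional terms supported on $\{x_1 = 0\}$ or $\{x_2 = 0\}$. A secondary point to be careful about is the choice of how to split $h_n$ into $D_i g_{n,i} + \tilde h_n$: one wants the $n^2$-sized term $v\,\Delta\zeta_n$ either placed in $\tilde f_n$ (where its $L_p$ norm is controlled because $v \sim r^{\pi/\omega}$ is very small on the inner annulus $|x| \sim 1/n$ and the outer annulus contributes an $O(1)$ amount) or, better, to note $v\,\Delta\zeta_n = D_i(v\,D_i\zeta_n) - Dv\cdot D\zeta_n$ so that everything is expressed via $g_{n,i} = 2v\,D_i\zeta_n$ and $\tilde f_n = -Dv\cdot D\zeta_n$, both of which have $L_p$ norms bounded uniformly in $n$ by the same scaling computation as in Lemma \ref{lem0425-2}. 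Once these routine verifications are in place, the contradiction with Statement \ref{stat0217_1} is immediate.
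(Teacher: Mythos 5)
Your construction does not produce a counterexample, because the key blow-up claim is false: for $v(r,\theta)=r^{\pi/\omega}\sin(\pi\theta/\omega)$ with $\omega=\tfrac{\pi p}{2p-2}$ we have, by Lemma \ref{lem0425-2} itself, $Dv\in L_p(B_\tau\cap\Omega_\omega)$; it is only $D^2v$ that fails to lie in $L_p$. Writing $Dv_n=\zeta_nDv+vD\zeta_n$, the term $\zeta_nDv$ is bounded in $L_p$ uniformly in $n$ because $Dv\in L_p$, and on the inner annulus $\{1/n\le |x|\le 2/n\}$ one has $|vD\zeta_n|\lesssim n\cdot n^{-\pi/\omega}=n^{1-\pi/\omega}$ over a set of measure $\sim n^{-2}$, so $\|vD\zeta_n\|_{L_p}^p\sim n^{p(1-\pi/\omega)-2}=n^{-p}\to 0$. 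Hence $\|Dv_n\|_{L_p}$ stays \emph{bounded}; your claim $\|Dv_n\|_{L_p}^p\sim\ln n$ is what would be true for $\|D^2v_n\|_{L_p}^p$, one derivative up. After the linear change of variables and the odd reflections the $W^1_p$-norm remains bounded, so the resulting sequence satisfies Statement \ref{stat0217_1} rather than violating it.

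The underlying issue is that a $W^1_p$ counterexample for a divergence-form operator needs a solution whose \emph{gradient} fails to be in $L_p$, whereas a harmonic corner singularity (constant coefficients on the wedge) is too mild: its gradient is always in $L_p$ for the range of $\omega$ used here. The paper resolves this by descending one order of differentiability: it takes the already-constructed non-divergence solution $\tilde u_n$, differentiates the equation $\tilde a^{ij}D_{ij}\tilde u_n=\tilde f_n$ once in $x_2$, and sets $v_n:=D_2\tilde u_n$. Crucially, Remark \ref{rem0218_1} guarantees $\tilde a^{11},\tilde a^{22}$ are genuine constants in $\bR^2$, so the differentiated equation can be regrouped as $D_i(a^{ij}D_jv_n)=D_2\tilde f_n$ with a piecewise-constant matrix $a^{ij}$. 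Then $\|Dv_n\|_{L_p}$ involves $D_{12}\tilde u_n$ and $D_{22}\tilde u_n$, and since $\|D^2\tilde u_n\|_{L_p}\to\infty$ while $D_{11}\tilde u_n$ is controlled by the others through the equation and the bounded datum $\tilde f_n$, the gradient norm of $v_n$ really does blow up. If you wanted a direct construction instead of this descent argument you would need a corner singularity for the \emph{piecewise-constant} operator (not the Laplacian), of the Piccinini--Spagnolo type recalled in Remark \ref{rem0217_1}, where the solution $u\sim r^\nu$ with $\nu<1$ has $Du\notin L_p$ for $p\ge 2/(1-\nu)$.
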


\begin{proof}
For a given $p > 2$, take $\tilde{u}_n \in W_p^2(\bR^2)$, $\tilde{f_n} \in L_p(\bR^2)$, and the coefficients $\tilde{a}^{ij}$ from the proof of Theorem \ref{thm0206_1} (i) above.
We set $v_n := D_2 \tilde{u}_n$, $g_n := \tilde{f}_n$, and
$$
a^{11} := \tilde{a}^{11},
\quad
a^{22} := \tilde{a}^{22},\quad
a^{12} :=0,
\quad
a^{21} := \tilde{a}^{12}+ \tilde{a}^{21}.
$$
Then $v_n \in W_p^1(\bR^2)$ and, by differentiating both sides of the equation $\tilde{a}^{ij} D_{ij}\tilde{u}_n = \tilde{f}_n$ with respect to $x_2$, we get
$$
D_i (a^{ij} D_j v_n ) = D_2 g_n
\quad
\text{in}
\,\,\, \bR^2.
$$
Indeed,
\begin{align*}
&D_2\left(\tilde{a}^{11} D_{11} \tilde{u}_n\right)
+
D_2\left(\tilde{a}^{12} D_{12} \tilde{u}_n\right)
+
D_2\left(\tilde{a}^{21} D_{21} \tilde{u}_n\right)
+
D_2\left(\tilde{a}^{22} D_{22} \tilde{u}_n\right)\\
&= D_1 (a^{11} D_1 v_n)
+ D_2 \left( a^{21} D_1 v_n\right)
+ D_2 ( a^{22} D_2 v_n ),
\end{align*}
where we used the fact that $\tilde{a}^{11}$ is constant in $\bR^2$. See Remark \ref{rem0218_1}.
Then from the proof of Theorem \ref{thm0206_1} (i) above, it is clear that there is
no constant $N$ satisfying the inequality in Statement \ref{stat0217_1} for the sequence $\{ v_n\} \subset W_p^1(\bR^2)$ if $\fL = D_i (a^{ij}D_j )$.
\end{proof}

\begin{remark}
							\label{rem0218_3}
Take $\bar{u}_n$ and $\bar{f}_n$ from the proof of Theorem \ref{thm0206_1} (i) and repeat the proof of Theorem \ref{thm0217_1} with $\bar{u}_n$ and $\bar{f}_n$ in place of $\tilde{u}_n$ and $\tilde{f}_n$. In particular, $v_n := D_2 \bar{u}_n \in W_p^1(\bR^2_+)$ satisfies $v_n = 0$
on $\partial \bR^2_+$  and
$$
D_i (a^{ij} D_j v_n) = D_2 g_n
\quad
\text{in}
\,\,\,
\bR^2_+,
$$
where $g_n := \bar{f}_n$.
%See Lemma 4.3 in \cite{DongKim2013}.
Then we see that that, for any $p \in (2, \infty)$, there exists an elliptic operator $\fL$ in divergence form with coefficients constant on each quadrant in $\bR^2_+$ such that the following estimate, which is a version of Statement \ref{stat0217_1} for $\bR^2_+$, {\em does not} hold: for any $u \in W_p^1(\bR^2_+)$, $g = (g_1,g_2) \in L_p(\bR^2_+)$, and $f \in L_p(\bR^2_+)$ satisfying $u = 0$ on $\partial \bR^2_+$ and
$\fL u = D_i g_i + f$ in $\bR^2_+$, we have
$$
\|D u\|_{L_p(\bR^2_+)}
\le N \|g\|_{L_p(\bR^2_+)} + N\|f\|_{L_p(\bR^2_+)}+ N \|u\|_{L_p(\bR^2_+)},
$$
where $N$ is independent of $u$, $g$, and $f$.
\end{remark}

Regarding $W_p^1$ estimates for divergence type equations, Piccinini and Spagnolo \cite[Example 2]{MR0361422} gave a counterexample for equations with {\em symmetric} piecewise constant coefficients when $p > 2$.
To popularize this example, we present it in Remark \ref{rem0217_1} below.

\begin{remark}
							\label{rem0217_1}
Let $0 < \theta_0 < \pi/2$, $\nu = \frac{4}{\pi} \theta_0$, $K=1/\tan^2 \theta_0$, and define
$$
a(\theta) =
\left\{
\begin{aligned}
&1 \quad \text{for} \quad 0 \le \theta < \frac \pi 2, \quad \pi \le \theta < \frac 3 2 \pi,
\\
&K \quad \text{for} \quad  \frac \pi 2 \le \theta < \pi, \quad \frac 3 2 \pi \le \theta < 2 \pi,
\end{aligned}
\right.
$$
and
$$
u(x) = u(r, \theta) = r^\nu w(\theta),
$$
where
$$
w(\theta) =
\left\{
\begin{aligned}
\sin \left( \nu \left( \theta -\frac \pi 4 \right)\right)
\quad
&\text{for}
\quad
0 \le \theta < \frac \pi 2,
\\
\frac{1}{\sqrt{K}} \cos \left( \nu \left( \theta - \frac 3 4 \pi \right) \right)
\quad
&\text{for}
\quad
\frac \pi 2 \le \theta < \pi,
\\
- \sin \left( \nu \left( \theta - \frac 5 4 \pi \right) \right)
\quad
&\text{for}
\quad
\pi \le \theta < \frac 3 2 \pi,
\\
- \frac{1}{\sqrt{K}} \cos \left( \nu \left( \theta - \frac 7 4 \pi \right) \right)
\quad
&\text{for}
\quad
\frac 3 2 \pi \le \theta < 2 \pi.
\end{aligned}
\right.
$$
Then one can check that $u \in W_2^1(B_R)$ and
$$
\fL u := \sum_{i=1}^2 D_i \left( a(\theta) D_i u \right) = 0
$$
in $B_R$  for any $R > 0$.
On the other hand,
$$
D u \notin W_p^1(B_R)
$$
unless $p < \frac{2}{1-\nu} \in (2,\infty)$.
If $K \to \infty$, then $\nu \to 0$ and $\frac{2}{1-\nu} \to 2$.
Therefore, by multiplying a cutoff function we obtain a counterexample to $W_p^1$ estimates (or an estimate as in Statement \ref{stat0217_1}) for divergence type equations.
\end{remark}

\section{Proof of Theorem \ref{thm0206_1} ({\rm ii})}
                                        \label{sec4}

\begin{proof}[Proof of Theorem \ref{thm0206_1} (ii)]
We prove by contradiction. For a given $p\in (1,2)$, let $\fL=D_i(a^{ij}D_{j})$ be the divergence type operator from Theorem \ref{thm0217_1} corresponding to $q=p/(p-1)>2$.
Keep in mind that the coefficients $a^{11}$ and $a^{22}$ are constant and $a^{22} = 1$. That is,
$$
\fL=D_1(a^{11}D_1)+D_2(a^{21}D_1)+D_2^2.
$$

To get a contradiction, suppose that the non-divergence type operator
$$
\cL:=a^{11}D_1^2+a^{21}D_{12}+D_2^2
$$
satisfies Statement \ref{stat0206_2} with some $\lambda>0$.
Then by the method of continuity, for any $f\in L_p(\bR^2)$, there is a unique solution $u\in W^2_p(\bR^2)$ satisfying
$$
\cL u - \lambda u=f
\quad
\text{in}\,\,\,\bR^2
$$
and
$$
\lambda \|u\|_{L_p(\bR^2)} + \sqrt{\lambda}\|D u\|_{L_p(\bR^2)} + \|D^2 u\|_{L_p(\bR^2)}
\le N\|f\|_{L_p(\bR^2)},
$$
where $N$ is independent of $u$.
It is easily seen that $v:=D_1 u \in W_p^1(\bR^2)$ satisfies the divergence form equation
\begin{equation}
							\label{eq0218_3}
\fL^* v-\lambda v=D_1(a^{11}D_1 v)+D_1(a^{21}D_2 v)+D_2^2 v-\lambda v=D_1 f
\end{equation}
in $\bR^2$.
Furthermore, we have
\begin{equation}
                                    \label{eq3.33}
\sqrt{\lambda}\|v\|_{L_p(\bR^2)} +
\|D v\|_{L_p(\bR^2)} \le N\|f\|_{L_p(\bR^2)}.
\end{equation}

Now we take $v_n \in W_q^1(\bR^2)$ and $g_n \in L_q(\bR^2)$ from the proof of Theorem \ref{thm0217_1} corresponding to $q \in (2,\infty)$. They satisfy
\begin{equation}
                                    \label{eq3.44}
\fL v_n =D_2 g_n
\quad
\text{in}
\,\,\,
\bR^2.
\end{equation}
Then from the equations \eqref{eq0218_3}, \eqref{eq3.44}, and the estimate \eqref{eq3.33}, we get
$$
\int_{\bR^2} f D_1 v_n \, dx
= \int_{\bR^2} \left( a^{11} D_1 v_n D_1 v  + a^{21} D_1 v_n D_2 v  + D_2 v_n D_2 v  + \lambda v_n v \right) \, dx
$$
$$
= \int_{\bR^2} \left(g_n D_2 v + \lambda v_n v\right) \, dx
\le N \|f\|_{L_p(\bR^2)}
\left( \|g_n\|_{L_q(\bR^2)} + \sqrt{\lambda}\|v_n\|_{L_q(\bR^2)}\right).
$$
This implies 
\begin{equation*}
							%\label{eq0218_4}
\|D_1 v_n\|_{L_q(\bR^2)} \le N \|g_n\|_{L_q(\bR^2)}
+ N \|v_n\|_{L_q(\bR^2)},
\end{equation*}
where $N$ is independent of $n$.
This is, however, impossible from the construction of $v_n$ and $g_n$, thus a contradiction.
\end{proof}

\begin{remark}
							\label{rem0219_1}
When $p \in (1,2)$, consider a version of Statement \ref{stat0206_2} for non-divergence type equations in $\bR^2_+$ with the zero Neumann boundary condition:
there exists $\lambda > 0$ such that,
for any $\mu \in [0,1]$ and $u \in W_p^2(\bR^2_+)$ with $D_1u = 0$ on $\partial \bR^2_+$,
\begin{equation}
							\label{eq0218_2}
\lambda \|u\|_{L_p(\bR^2_+)} + \sqrt{\lambda}\|Du\|_{L_p(\bR^2_+)} + \|D^2u\|_{L_p(\bR^2_+)} \le N \| \cL_\mu u - \lambda u\|_{L_p(\bR^2_+)},
\end{equation}
where $N$ is independent of $u$ and $\mu$.
If we follow the proof of Theorem \ref{thm0206_1} (ii) with the statements in Remark \ref{rem0218_3}, then we see that, for a given $p \in (1,2)$, there is an operator $\cL$ in non-divergence form with coefficients constant on each quadrant in $\bR^2_+$ such that the estimate \eqref{eq0218_2} {\em does not} hold.

On the other hand, in \cite{MR3033629} and \cite{DongKim2013}, for $p\ge 2$  we obtained the estimate \eqref{eq0218_2} for non-divergence form elliptic and parabolic operators in $\bR^d_+$ with the Neumann boundary condition when the coefficients have no regularity assumptions in a tangential direction.
This includes the operator $\cL$ having piecewise constant coefficients.  Therefore, the range of $p$ in the results of \cite{MR3033629, DongKim2013} for the Neumann case is sharp.
\end{remark}

\bibliographystyle{plain}

\def\cprime{$'$}\def\cprime{$'$} \def\cprime{$'$} \def\cprime{$'$}
  \def\cprime{$'$} \def\cprime{$'$}

\end{document}